\numberwithin{equation}{section}
\theoremstyle{plain}\newtheorem{maintheorem}{Theorem}
\newtheorem{Thm}{Theorem}[section]
\theoremstyle{remark}
\newtheorem{Def} {Definition}[section]
\newtheorem{lem} {Lemma}[section]
\newtheorem{prop} {Proposition}[section]
\theoremstyle{remark}
\newtheorem{rem}  {Remark}[section]
\theoremstyle{definition}
\theoremstyle{definition}
\theoremstyle{remark}
\numberwithin{equation}{section}
\newcommand{\N}{{\mathbb{N}}}
\newcommand{\B }{{\mathcal{B}}}
\newcommand{\cJ }{{\mathcal{J}}}
\newcommand*{\dif}{\mathop{}\!\mathrm{d}}
\begin{document}
\title[Points that are Recurrent  but Not Banach Recurrent] {Existence and Distributional  Chaos of Points that are Recurrent  but Not Banach Recurrent }
\author[Y. Jiang] {Yifan Jiang}
\address[Y. Jiang]{Mathematical Institute, University of Oxford  \\  Oxford OX2 6GG, United Kingdom}
\email{yifan.jiang@maths.ox.ac.uk}
\urladdr{https://www.maths.ox.ac.uk/people/yifan.jiang}

\author[X. Tian] {Xueting Tian}
\address[X. Tian]{School of Mathematical Sciences,  Fudan University   \\  Shanghai 200433, People's Republic of China}
\email{xuetingtian@fudan.edu.cn}
\urladdr{http://homepage.fudan.edu.cn/xuetingtian}

\thanks{Y. Jiang was partially supported by Fudan project FDUROP (Fudan's Undergraduate Research Opportunities Program), and the EPSRC Centre for Doctoral Training in Mathematics of Random Systems: Analysis, Modelling, and Simulation (EP/S023925/1). X. Tian is the corresponding author and was supported by National Natural Science Foundation of China  (grant No.12071082) and in part by Shanghai Science and Technology Research Program (grant No.21JC1400700).}
\keywords{Recurrence;  Symbolic Dynamics; Distributional Chaos.}
\subjclass[2020] {37B10;  37B20;  37D45}

\def\abstractname{\textbf{Abstract}}

\begin{abstract}
  \addcontentsline{toc}{section}{\bf{English Abstract}}
  According to the recurrent frequency, many levels of recurrent points are found such as periodic points, almost periodic points,  weakly almost periodic points, quasi-weakly almost periodic points and Banach recurrent points.
  In this paper, we consider symbolic dynamics and show the existence of six refined levels between Banach recurrence and general recurrence.
  Despite the fact that these refined levels are all null-measure under any invariant measure, we further show they carry strong topological complexity.
  Each refined level of recurrent points is dense in the whole space and contains an uncountable distributionally chaotic subset.
  For a wide range of dynamical systems such as expansive systems with the shadowing property, we also show the distributional chaos of the points that are recurrent but not Banach recurrent.
  
\end{abstract}
\maketitle

\section{Introduction}
\label{intro}
In classical study of dynamical systems, one fundamental problem is to consider the asymptotic behavior of the orbits.
According to the `recurrent frequency'  (i.e.,  the probability of finding the orbit of an initial point entering in its neighborhood), many levels of recurrent points  have been studied such as periodic points, almost periodic points,  weakly almost periodic points, quasi-weakly almost periodic points and Banach recurrent points, see \cite{HYZ,ZH}.
The existence and topological complexity of different recurrent levels are always of interest.
It is known that the set of recurrent points has full measure for any invariant measure by Poincar\'e recurrence theorem  and the set of minimal points  is not empty \cite{Furst,Walter}.
For systems with the Bowen specification property (such as topological mixing subshifts of finite type and topological mixing uniformly hyperbolic systems), the set of periodic points is dense in the whole space \cite{Sigmund1974}.
In \cite{T16,HTW,CT2019} the authors considered various Banach recurrence and showed different recurrent levels carry strong dynamical complexity from the perspective of topological entropy and distributional chaos.

In this paper we aim to study the existence and complexity of points that are recurrent but not Banach recurrent.
Let \((X,d)\) be a compact metric space and \(f:X \to X\) be a continuous map.
Such \((X,f)\) is called a (topological) dynamical system.
For any \(x \in X\), the orbit of \(x\) is \(\{f^n(x)\}_{n=0}^\infty\), denoted by \(orb(x)\).
For any \(\varepsilon>0\), \(B_{\varepsilon}(x)\) denotes the open ball with radius \(\varepsilon\) centered at \(x\).
Let \(U,\, V\subseteq X\) be two nonempty open subsets and \(x\in X\).
We define the sets of visiting times
\[N (U, V):=\{n\geq 1\,|\,  U\cap f^{-n} (V)\neq \emptyset\} \,  \,  \text{ and } \,  \,  N (x,  U):=\{n\ge 1|\,  f^n (x)\in U\}.\]
For a subset  \(S\subseteq \mathbb{N}\),  we define four different densities of \(S\) as the following
\[\bar{d} (S):=\limsup_{n\rightarrow\infty}\frac{|S\cap \{0,  1,  \cdots,  n-1\}|}n,  \,  \,   \underline{d} (S):=\liminf_{n\rightarrow\infty}\frac{|S\cap \{0,  1,  \cdots,  n-1\}|}n, \]
\[B^* (S):=\limsup_{|I|\rightarrow\infty}\frac{|S\cap I|}{|I|},  \,  \,   B_* (S):=\liminf_{|I|\rightarrow\infty}\frac{|S\cap I|}{|I|}.\]
Here \(I\subseteq \mathbb{N}\) is taken from finite consecutive integer intervals and \(|\cdot|\) denotes the cardinality of a set.
These four concepts are called {\it upper density} and {\it lower density} of \(S\), {\it Banach upper density} and {\it Banach lower density} of \(S\),   respectively, which are basic and have played important roles in the field of dynamical systems,   ergodic theory and number theory,   etc.

We first introduce six levels of recurrent points arranged according to their asymptotic behaviors, see \cite{HTW,HYZ,Walter,ZH}.
\begin{Def}
  \label{def-recurr}
  A point \(x\in X\) is \textit{regular}, if empirical measures \(\frac{1}{n}\sum_{i=0}^{n-1}\delta_{f^i(x)}\) converge to an ergodic measure \(\mu_{x}\) and \(x\) is contained in the support of \(\mu_{x}\);\\
  A point \(x\in X\) is \textit{weakly almost periodic}, if \(N(x,B_{\varepsilon}(x))\) has a positive lower density for any \(\varepsilon>0\);\\
  A point \(x\in X\) is \textit{quasi-weakly almost periodic}, if \(N(x,B_{\varepsilon}(x))\) has a positive upper density for any \(\varepsilon>0\);\\
  A point \(x\in X\) is \textit{Banach recurrent}, if \(N(x,B_{\varepsilon}(x))\) has a positive Banach upper density for any \(\varepsilon>0\);\\
  A point \(x\in X\) is \textit{recurrent}, if \(N(x,B_{\varepsilon}(x))\neq\emptyset\) for any \(\varepsilon>0\);\\
  A point \(x\in X\) is \textit{non-wandering}, if \(N(B_{\varepsilon}(x),B_{\varepsilon}(x))\neq\emptyset\) for any \(\varepsilon>0\).
\end{Def}
The set of points with above asymptotic behaviors are denoted by \(R(X),\) \(W(X),\) \(QW(X),\) \(BR(X),\) \(Rec(X),\) \(\Omega(X)\) respectively.
We will omit the argument \(X\) when the dynamical system is clear.
From the above definitions, we find that these six recurrent levels are invariant and actually nested from left to right.
The set of regular points has totally full measure \cite[Theorem 2.5 on Page 120]{Oxt}, (i.e., carrying full measure under any invariant measure).
This implies the gap-sets between \(R,\) \(W,\) \(QW,\) \(BR,\) \(Rec,\) \(\Omega\) have totally zero measure, and thus they are negligible from the probabilistic perspective.
Surprisingly, these gap-sets may still carry strong  topological  complexity.
For example, the sets  \((W\setminus R),\, (QW\setminus W),\,(BR\setminus QW)  \text{ and }    (\Omega \setminus Rec)\)   have the same topological entropy as the system itself and display strong distributional chaos for a large class of systems including all transitive Anosov diffeomorphisms, see  \cite{T16,HTW,CT2019,DongTian2016} for details.
However, the topological complexity characterizations and even the existence of  \((Rec\setminus BR)\) are still unknown.
Our first result shows that, for one-sided full shifts, the gap-set \((Rec\setminus BR)\) is dense and carries distributional chaos of type 1.
\begin{maintheorem}
  \label{Thm-main-symbolic}
  Given \(k\geq 2\), let \((\Sigma,\sigma)\) denote the one-sided full shift on \(k\) symbols.
  Then the gap-set  \(Rec(\Sigma)\setminus BR(\Sigma)\)  is dense in \(\Sigma\) and contains an uncountable DC-1 scrambled subset.
\end{maintheorem}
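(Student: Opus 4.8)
\emph{Overview and guiding idea.} The plan is to build, for each index $\xi$ in a suitable uncountable set $\Lambda\subseteq\{0,1\}^{\mathbb{N}}$, a point $x_\xi\in\Sigma_k$ lying in $Rec(\Sigma_k)\setminus BR(\Sigma_k)$, so that distinct points are DC1-scrambled, and to obtain density by prescribing the prefix. The three demands compete, so I separate them onto different scales: a \emph{rare marker} carries the failure of Banach recurrence, a \emph{self-similar doubling} carries recurrence, and \emph{long free blocks} carry the chaos. Fix a marker $w$: if $k\ge 3$ let $w=2$ be a symbol used nowhere else, and if $k=2$ let $w=11$ while forcing every other block to begin and end in $0$ and to contain no factor $11$, so that $w$ occurs only where placed by hand. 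Define prefixes $u_n^\xi$ (each a prefix of $u_{n+1}^\xi$, with $|u_n^\xi|=p_n$ independent of $\xi$) by $u_1=w$ and
\[ u_{n+1}^\xi = u_n^\xi\,0^{l_n}\,u_n^\xi\,0^{m_n}\,C_n(\xi_n), \]
where $C_n(0),C_n(1)$ are the length-$c_n$ prefixes of $(10)^\infty,(01)^\infty$ (over $\{0,1\}$, avoiding the marker), which disagree in every coordinate. Set $x_\xi=\lim_n u_n^\xi$. I would choose $l_n,m_n,c_n$ to grow so fast that each appended block exceeds (say) the square of the current cumulative length, yielding the alternation $p_n\ll l_n\ll c_n\ll l_{n+1}$.

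\emph{Recurrence, failure of Banach recurrence, density.} Recurrence is immediate: the second copy of $u_n^\xi$ inside $u_{n+1}^\xi$, iterated upward, makes every $u_n^\xi$, hence every prefix of $x_\xi$, reappear infinitely often, so $x_\xi\in Rec(\Sigma_k)$. For the marker, its occurrences are exactly the $2^{n-1}$ copy-starts inside $u_n^\xi$; since consecutive level-$(n{+}1)$ blocks are separated by the huge gap $0^{l_{n+1}}\gg p_{n+1}$, any interval $I$ with $p_n\le|I|<p_{n+1}$ meets at most $2^{n}$ markers, so $|N(x_\xi,[w])\cap I|/|I|\le 2^{n}/p_n\to 0$ and $B^*(N(x_\xi,[w]))=0$. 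Choosing $\varepsilon$ with $B_\varepsilon(x_\xi)\subseteq[w]$ gives $B^*(N(x_\xi,B_\varepsilon(x_\xi)))=0$, so $x_\xi\notin BR(\Sigma_k)$; it is crucial here that $B^*$ only sees intervals with $|I|\to\infty$, so the recurring short marker-clusters (which have positive local frequency) contribute nothing. Density of $Rec\setminus BR$ then follows by rerunning the construction with $u_1$ replaced by an arbitrary prescribed word $v$ and the marker still inserted by hand, producing a point of $Rec\setminus BR$ inside the cylinder $[v]$.

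\emph{Distributional chaos.} Take $\Lambda$ uncountable with no two elements eventually equal (a transversal of the countable ``eventually equal'' classes), so distinct $\xi,\xi'$ differ at infinitely many stages. Write $\overline F_{x_\xi x_{\xi'}}(t)=\limsup_n\frac1n|\{0\le i<n:d(\sigma^i x_\xi,\sigma^i x_{\xi'})<t\}|$ and let $\underline F$ be the corresponding $\liminf$. At the end of each gap $0^{l_n}$, the common all-zero block of length $l_n$ dominates the whole prefix, and for $i$ inside it the two shifts agree on as many initial coordinates as one likes; hence the proportion of $i$ with $d(\sigma^i x_\xi,\sigma^i x_{\xi'})<t$ tends to $1$ along these times, giving $\overline F_{x_\xi x_{\xi'}}(t)=1$ for every $t>0$. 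At each stage $n$ with $\xi_n\ne\xi'_n$, the block $C_n(\xi_n)$ disagrees with $C_n(\xi'_n)$ in every coordinate and, having length $c_n$ dominating its prefix, forces the proportion of $i$ at which the two shifts agree in the $0$th coordinate to tend to $0$; hence $\underline F_{x_\xi x_{\xi'}}(t_0)=0$ for any fixed $t_0>0$ below the minimal distance between points disagreeing at the $0$th coordinate. Thus every such pair is DC1-scrambled, and since $\xi\mapsto x_\xi$ is injective on $\Lambda$, the set $\{x_\xi:\xi\in\Lambda\}$ is an uncountable DC1-scrambled subset of $Rec\setminus BR$.

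\emph{Main obstacle.} I expect the crux to be satisfying the three properties at once: recurrence and the failure of Banach recurrence both push toward ``sparse/simple'' orbits, whereas DC1 requires the points to disagree on a coordinate set of asymptotic density one along a subsequence, i.e.\ to be ``rich''. The resolution is to confine the density-zero phenomenon to a single rare marker, so that only one specific prefix has zero Banach upper density while the bulk of each word stays rich enough to scramble, and to use super-fast block growth so that long common gaps and long opposite blocks alternate in dominating the prefix, delivering $\overline F\equiv 1$ and $\underline F=0$ together. The delicate points to check are the marker-sparsity estimate (using $|I|\to\infty$ honestly, so that short dense clusters do not inflate $B^*$) and, when $k=2$, the combinatorial bookkeeping guaranteeing that the factor $11$ is never created accidentally at a block boundary.
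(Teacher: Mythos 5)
Your construction is correct in all essentials, but it follows a genuinely different route from the paper. The paper does not build the points directly: it constructs a transfer map \(T\) from a mixing subshift of finite type \(\Pi_L\) into \(\Sigma_k\) via the same doubling skeleton \(A_{n+1}=A_nB_nA_n\), but with the filler blocks \(B_n\) taken from the language of \(\Pi_L\) and the seed \(A_1\) chosen to be a word \emph{not} in that language; failure of Banach recurrence then follows from the structural fact \(\omega_{B^*}(T(y))\subseteq\Pi_L\not\ni T(y)\) (Proposition 3.1, proved by a density count of ``forbidden'' subwords), and the uncountable DC1-scrambled set is not built by hand but imported from the known result for systems with specification (\cite{CT2019}) using the facts that \(T\) preserves DC1 pairs and empirical measures. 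That extra machinery is what buys the paper its finer Theorems B and C (the six cases (i') of statistical \(\omega\)-limit sets, and the multifractal refinement). Your argument trades that generality for self-containedness: the same doubling gives recurrence, the rare marker with the interval count \(|N(x_\xi,[w])\cap I|/|I|\le 2^n/p_n\to0\) kills Banach recurrence elementarily (your cluster-separation argument is sound because the gaps \(0^{l_{n+1}}\) exceed \(|I|\), so an interval of length \(<p_{n+1}\) meets only one designated copy of \(u_{n+1}\); the doubly exponential growth \(p_{n+1}\ge p_n^2\) is what makes \(2^n/p_n\to0\), so that hypothesis is not optional), and the alternating long zero-blocks and everywhere-disagreeing blocks indexed by an almost-disjoint family give \(\Phi^*\equiv1\) and \(\Phi(1/2)=0\) directly. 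The one step you should tighten is density: if \(u_1\) is replaced by an arbitrary prescribed word \(v\), the marker must be chosen \emph{after} seeing \(v\) (e.g.\ a run \(1^M\) with \(M\) exceeding every run of \(1\)'s in \(v\), separated from \(v\) by a \(0\)), since otherwise \(v\) itself or a block boundary could create spurious marker occurrences and the sparsity count would no longer bound \(N(x,B_\varepsilon(x))\) for the relevant \(\varepsilon\); this is routine but not automatic as written.
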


Motivated by using different densities to describe the probability of one orbit entering one set, several statistical \(\omega\)-limit sets   were introduced in \cite{DongTian2016}.
These concepts describe the place where one general orbit can go and then give many refined characterizations of general orbits.
\begin{Def}
  The \(\omega\)-limit set of \(x\) is the set of all limit points of \(orb(x)\), denoted by \(\omega_{f}(x)\).
  For \(x\in X\) and \(\xi=\overline{d},   \,  \underline{d},   \,  B^*,  \,   B_*\),   a point \(y\in X\) is called \(x\)-\(\xi\)-accessible,   if \(N (x,  B_\varepsilon (y))\) has a positive density with respect to \(\xi\) for any \(\varepsilon>0\).
  Then, we define the \(\omega_{\xi}\)-limit set of \(x\) as
  \[\omega_{\xi}(x):=\{y\in X\,  |\,   y\text{ is } x\text{-}\xi\text{-accessible}\}.\]
\end{Def}
With these definitions, one can immediately obtain that
\begin{equation*}\label{omega-order}
  \omega_{B_*}(x)\subseteq \omega_{\underline{d}}(x)\subseteq \omega_{\overline{d}}(x)\subseteq \omega_{B^*}(x)\subseteq { \omega_{f}(x)}.
\end{equation*}
For any \(x\in X\), if \(\omega_{B_*}(x)=\emptyset\), then we know that  \(x\) satisfies one and only one of following twelve cases:
\begin{description}
  \item[Case  (1)\;  ] \,  \, \(\omega_{B_*}(x)\subsetneq\omega_{\underline{d}}(x)= \omega_{\overline{d}}(x)= \omega_{B^*}(x)= { \omega_{f}(x)};\)
  \item[Case  (1')   ]  \,  \,   \(\omega_{B_*}(x)\subsetneq\omega_{\underline{d}}(x)= \omega_{\overline{d}}(x)= \omega_{B^*}(x)\subsetneq{ \omega_{f}(x)};\)
  \item[Case  (2)\;  ]  \,  \,   \(\omega_{B_*}(x)\subsetneq    \omega_{\underline{d}}(x)=\omega_{\overline{d}}(x)  \subsetneq \omega_{B^*}(x)= { \omega_{f}(x)} ;\)
  \item[Case  (2')   ]  \,  \,   \(\omega_{B_*}(x)\subsetneq  \omega_{\underline{d}}(x)=\omega_{\overline{d}}(x)  \subsetneq \omega_{B^*}(x)\subsetneq { \omega_{f}(x)} ;\)
  \item[Case  (3)\;  ]  \,  \,   \(\omega_{B_*}(x)=\omega_{\underline{d}}(x)\subsetneq\omega_{\overline{d}}(x)= \omega_{B^*}(x)= { \omega_{f}(x)} ;\)
  \item[Case  (3')   ] \,  \,   \(\omega_{B_*}(x)=\omega_{\underline{d}}(x)\subsetneq\omega_{\overline{d}}(x)= \omega_{B^*}(x)\subsetneq { \omega_{f}(x)} ;\)
  \item[Case  (4)\;  ]   \,  \,  \(\omega_{B_*}(x)\subsetneq  \omega_{\underline{d}}(x)\subsetneq\omega_{\overline{d}}(x)= \omega_{B^*}(x)= { \omega_{f}(x)} ;\)
  \item[Case  (4')   ]  \,  \, \(\omega_{B_*}(x)\subsetneq  \omega_{\underline{d}}(x)\subsetneq\omega_{\overline{d}}(x)= \omega_{B^*}(x)\subsetneq { \omega_{f}(x)} ;\)
  \item[Case  (5)\;  ]  \,  \,   \(\omega_{B_*}(x)=  \omega_{\underline{d}}(x) \subsetneq \omega_{\overline{d}}(x)\subsetneq \omega_{B^*}(x)= { \omega_{f}(x)};\)
  \item[Case  (5')   ] \,  \,   \(\omega_{B_*}(x)=  \omega_{\underline{d}}(x) \subsetneq \omega_{\overline{d}}(x)\subsetneq \omega_{B^*}(x)\subsetneq { \omega_{f}(x)};\)
  \item[Case  (6)\;  ]   \,  \, \(\omega_{B_*}(x)\subsetneq \omega_{\underline{d}}(x) \subsetneq \omega_{\overline{d}}(x)\subsetneq \omega_{B^*}(x)= { \omega_{f}(x)};\)
  \item[Case  (6')   ]  \,  \,  \(\omega_{B_*}(x)\subsetneq \omega_{\underline{d}}(x) \subsetneq \omega_{\overline{d}}(x)\subsetneq \omega_{B^*}(x)\subsetneq { \omega_{f}(x)}.\)
\end{description}
There are twelve cases rather than sixteen because  \(\omega_{\overline{d}}(x)\) must be a nonempty set, see \cite{DongTian2016}.
We point out that a recurrent point  satisfying one of Cases (1)-(6) is Banach recurrent.
Thus, the set \((Rec\setminus BR)\) can only be contained in Cases (1')-(6').
Now we give a refined result of Theorem \ref{Thm-main-symbolic}.
\begin{maintheorem}
  \label{Thm-main-omega-DC1}
  Given \(k\geq 2\), let \((\Sigma,\sigma)\) be the one-sided full shift on \(k\) symbols.
  Then \(\left\{ x\in Rec(\Sigma)\setminus BR(\Sigma) \,|\,x \text{ satisfies Case (i') }\right\}\) is dense in \(\Sigma\) and contains an uncountable DC-1 scrambled subset for i=1-6.
\end{maintheorem}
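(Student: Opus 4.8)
The plan is to establish all six assertions through one flexible symbolic construction, realizing each Case (i') separately by an appropriate choice of frequency schedule. The starting point is the dictionary between symbolic combinatorics and the statistical centers: for \(x=(x_j)\in\Sigma_k\), a point \(y\) lies in \(\omega_{\overline{d}}(x)\) (resp.\ \(\omega_{\underline{d}}(x)\), \(\omega_{B^*}(x)\), \(\omega_{B_*}(x)\)) if and only if for every \(m\) the set of occurrence positions of the length-\(m\) prefix of \(y\) in \(x\) has positive upper density (resp.\ positive lower density, positive Banach upper density, positive Banach lower density). In this language \(x\in Rec\) means the prefixes of \(x\) reappear as prefixes of \(\sigma^n x\) for infinitely many \(n\), i.e.\ \(x\in\omega_f(x)\), while \(x\notin BR\) means \(x\notin\omega_{B^*}(x)\), i.e.\ the occurrences of each fixed prefix of \(x\) have Banach upper density zero. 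Thus a point of \(Rec\setminus BR\) is exactly one sitting in the gap \(\omega_f(x)\setminus\omega_{B^*}(x)\), which is compatible only with the primed cases; this is precisely why, as noted after the case list, Cases (1)--(6) cannot occur here.

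First I would fix, for each Case (i'), finitely many closed \(\sigma\)-invariant target sets \(A_{\underline{d}}\subseteq A_{\overline{d}}\subseteq A_{B^*}\subsetneq\Sigma_k\) exhibiting the prescribed equalities and strict inclusions, and build \(x\) as a concatenation \(x=u\,w_1 w_2 w_3\cdots\) of longer and longer blocks, where \(u\) is an arbitrary prescribed prefix reserved for the density argument. The blocks \(w_n\) are organized into a hierarchy of separated scales: patterns approximating \(A_{\overline{d}}\) are inserted along windows of positive upper density whose lower density is made to vanish or to persist according to whether Case (i') asks for \(\omega_{\underline{d}}\subsetneq\omega_{\overline{d}}\) or \(\omega_{\underline{d}}=\omega_{\overline{d}}\); patterns surviving only in the Banach sense (occurring in rare but arbitrarily long windows) produce the extra points of \(\omega_{B^*}\setminus\omega_{\overline{d}}\) when Case (i') requires \(\omega_{\overline{d}}\subsetneq\omega_{B^*}\); and the emptiness \(\omega_{B_*}(x)=\emptyset\) is secured by interleaving, for every fixed finite word, arbitrarily long gaps in which that word does not occur. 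Crucially, ever-longer prefixes of \(x\) itself are reinserted infinitely often to force \(x\in\omega_f(x)\), but only along a very sparse schedule of Banach upper density zero, forcing \(x\notin\omega_{B^*}(x)\) and placing \(x\) in \(Rec\setminus BR\) outside all four statistical centers, as the primed cases demand. This refines the existence scheme of \cite{DongTian2016} by adding the self-similar reinsertion of prefixes of \(x\).

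Next I would promote a single such \(x\) to an uncountable DC1-scrambled set \(S_{i'}\). The idea is to let all blocks carrying the center and recurrence data be shared by the whole family, and to reserve a disjoint, interleaved subsequence of coding windows indexed by a parameter \(\alpha\) ranging over a Cantor set of admissible schedules. Along a first subsequence of scales all family members agree on blocks of length tending to infinity, forcing, for every \(t>0\), the upper density of \(\{i:d(\sigma^i x^{(\alpha)},\sigma^i x^{(\beta)})<t\}\) to equal \(1\), hence \(\Phi^{*}_{x^{(\alpha)}x^{(\beta)}}\equiv 1\); along a second, interleaved subsequence the coding windows are filled so that distinct \(\alpha\neq\beta\) disagree on a definite fraction of coordinates, yielding a separation \(d(\sigma^i x^{(\alpha)},\sigma^i x^{(\beta)})\ge\delta\) whose density along those scales tends to \(1\), hence \(\Phi_{x^{(\alpha)}x^{(\beta)}}(\delta)=0\). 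Together these give DC1 for every pair, and the family is uncountable since the coding makes the parameter-to-point map injective. The coding content can be drawn from the words already used to approximate \(A_{\overline{d}}\), so that, once their sparsity is arranged, they create no accessible point beyond those already present; and density of \(S_{i'}\) in \(\Sigma_k\) follows because \(u\) was arbitrary and prepending a finite word changes neither the limit sets nor the distribution functions.

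The main obstacle is the simultaneous reconciliation of four competing frequency demands at mutually compatible scales. Recurrence forces prefixes of \(x\) to recur, yet Banach non-recurrence forces exactly those recurrences to be Banach-null; meanwhile the DC1 mechanism forces long agreement blocks (for \(\Phi^{*}=1\)) and long \(\delta\)-separation blocks (for \(\Phi=0\)), and Case (i') imposes its own rigid pattern of positive and zero upper, lower and Banach densities. The delicate point is to schedule the prefix-reinsertion windows, the agreement windows, the separation windows and the center-approximation windows so sparsely and at such widely separated scales that each controls its own statistic without perturbing the others — in particular so that the agreement blocks needed for \(\Phi^{*}=1\) never lift the Banach upper density of returns to \(x\) above zero, and the separation blocks never destroy the prescribed (non)emptiness and (non)equality among the statistical centers. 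Verifying this \emph{non-interference}, case by case for the six schedules, is where the real work lies; the recurrence, density and distributional chaos conclusions then follow from the dictionary above.
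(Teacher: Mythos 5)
Your core mechanism for placing points in \(Rec\setminus BR\) --- self-similar reinsertion of ever-longer prefixes of \(x\) along a schedule of Banach upper density zero --- is exactly right, and is in essence the paper's own device (\(A_{n+1}=A_nB_nA_n\) with \(|A_n|=o(|B_n|)\)). The gap is that your argument stops where the substance of the theorem begins: you yourself say that verifying non-interference "case by case for the six schedules is where the real work lies," and none of it is carried out. The hardest sub-problem --- exhibiting, for each of the six configurations of \(\omega_{\underline d}\subseteq\omega_{\overline d}\subseteq\omega_{B^*}\), an uncountable family that is simultaneously DC1-scrambled --- is a theorem in its own right, and the two-subsequence sketch does not engage with its central difficulty. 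Concretely: to get \(\Phi_{x^{(\alpha)}x^{(\beta)}}(\delta)=0\) you need separation windows filling a fraction of \([0,n_k)\) tending to \(1\); in Cases (1') and (2') you simultaneously need \(\omega_{\underline d}=\omega_{\overline d}\), i.e.\ every point of \(\omega_{\overline d}\) visited with positive \emph{lower} density, hence with a definite frequency inside those very windows, uniformly over uncountably many mutually \(\delta\)-separated sequences, and with \(V(x^{(\alpha)},\sigma)\) controlled for every \(\alpha\). Nothing in the proposal explains how the coding alphabet achieves both at once. A smaller but genuine flaw: the closing density argument ("prepending a finite word changes neither the limit sets nor the distribution functions") is unsound as stated, since prepending a word to a recurrent point generally destroys recurrence; the prescribed prefix must be wired into the reinserted skeleton (as the paper does by choosing the seed \(A_1\) inside the target cylinder), not merely prepended.

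For comparison, the paper sidesteps your non-interference problem by factoring the construction. It fixes a mixing SFT \(\Pi_L\subsetneq\Sigma_k\) with the specification property, where uncountable DC1-scrambled sets of transitive points realizing each Case (i) are already available from \cite{CT2019}; it then defines \(T:\Pi_L\to\Sigma_k\) by \(T(y)=\lim A_n\), \(A_{n+1}=A_nB_nA_n\), with \(B_n\) built from long repetitions of the prefix of \(y\), and proves three transport statements: \(\omega_{B^*}(T(y))\subseteq\Pi_L\subsetneq\omega_\sigma(T(y))\) (so \(T(y)\in Rec\setminus BR\)); \(V(T(y),\sigma)=V(y,\sigma)\) (so \(\omega_{\overline d}\) and \(\omega_{\underline d}\) are preserved and Case (i) becomes Case (i')); and \(T\) sends DC1 pairs to DC1 pairs. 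All the competing frequency demands you list are thereby confined to the blocks copied from \(y\), where they are already reconciled by the cited result, and the single new scheduling decision --- the sparse reinsertion of \(A_n\) --- is analyzed once for all six cases. To complete your direct approach you would in effect have to reprove that input from \cite{CT2019} inside your scheme.
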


The technique used in this paper is to construct a map \(T\) which maps Case (i) to Case (i') and keeps distributional chaos invariant.
The remainder of the paper is organized as follows.
In section 2, we introduce basic concepts and notations of symbolic dynamics, distributional chaos and multifractal analysis.
Section 3 discusses the construction of \(T\) and its properties.
In section 4, we construct an auxiliary subshift which satisfies the specification property.
Section 5 is devoted to the proof of main theorems.
In section 6, we extend the previous results to general dynamical systems.

We remark that the topological entropy on the set of  \((Rec\setminus BR)\)  is still an open problem.
The map \(T\) constructed in this paper does not necessary map a full entropy set to a full entropy set.

\section{Preliminaries}
\label{sec-prem}
\subsection{Basic notations and facts}
\label{subsec-not}
Let \((X,d)\) be a compact metric space and \(f:X\to X\) be a continuous map.
Let \( M(X)\) denote  the space of probability measures  on \(X\) equipped with the weak-star topology, \( M_{f}(X)\) the set of all invariant measures under \(f\).
For any measure \(\mu\in M(X)\), by \(S_\mu\) we denote its support.
By \(V_{f}(x)\) we denote the limit points of the empirical measures \(\left\{\frac{1}{n}\sum_{i=0}^{n-1}\delta_{f^i(x)}\right\}_{n\geq 1}\).

The following characterization of statistical \(\omega\)-limit sets is from \cite{DongTian2016}.
\begin{lem}\label{Lem-omegalimit}
  For any \(x\in X\) we have \(\omega_{\underline{d}}(x)=\bigcap_{\mu\in V_{f}(x)} S_\mu\) and \(\omega_{\overline{d}}(x)=\overline{\bigcup_{\mu\in V_{f}(x)} S_\mu}\).
  Furthermore, we have \(\omega_{B_*}(x)=\bigcap_{\mu\in  M_{f}(\Lambda)} S_\mu\) and \(\omega_{B^*}(x)=\overline{\bigcup_{\mu\in  M_{f}(\Lambda)} S_\mu}\), where \(\Lambda=\omega_{f}(x)\).
\end{lem}

A point \(x\in X\) is \textit{transitive} if \(\omega_{f}(x)=X\) and we denote all the transitive points in \(X\) by \(Trans(X)\).
A dynamical system \((X,f)\) is \textit{transitive} if for every pair of nonempty open sets \(U,\,V\subseteq X\), there exists a positive integer \(n\) such that \(f^{n}(U)\cap V\neq \emptyset.\)
A dynamical system \((X,f)\) is \textit{topological mixing}  if there exists a positive integer \(N\) such that \(f^{n}(U)\cap V\neq \emptyset\) for any \(n\geq N\).

Following \cite{PM,CT2019}, we introduce the specification property which was firstly introduced by Bowen.
For any \(\varepsilon>0\), \(x,y\in X\) and nonnegative integers \(n\leq m\), we say \(x\) \(\varepsilon\)\textit{-traces} \(y\) \textit{on} \([n,m]\) if for any \(n\leq i\leq m\)
\[d(f^{i}(x),f^{i-n}(y))<\varepsilon.\]
A dynamical system \((X,f)\) has \textit{the specification property} if for any \(\varepsilon>0\), there is a positive integer \(N_{\varepsilon}\) such that for any integer \(s\geq 2\), any \(s\) points \(\{y_{1},\cdots,y_{s}\}\) in \(X\), and any sequence
\[0=n_{1}\leq m_{1}<n_{2}\leq m_{2}<\cdots <n_{s}\leq m_{s}\]
of \(2s\) integers with
\[n_{i+1}-m_{i}\geq N_{\varepsilon}\]
for \(1\leq i<s\), there exists a point \(x\in X\) such that \(x\) \(\varepsilon\)-traces \(y_{i}\) on \([n_{i},m_{i}]\) for all \(1\leq i\leq s\).

\subsection{Symbolic dynamics}
\label{subsec-symb}
We introduce the standard one-sided full shift as follows.
Given a positive integer \(k\), let \(\mathcal{A}=\left\{0,1,\cdots,k-1\right\}\) be a set of \(k\) symbols.
Let \(\Sigma=\prod_{n=1}^{\infty}\mathcal{A}\).
We denote each point in \(\Sigma\) by a lowercase letter, e.g., \(a=(a_{1}a_{2}\cdots)\).
Then \(\Sigma\) is a compact space equipped with the following metric
\[d(a,b):=\sum_{n=1}^{\infty}\frac{\delta(a_n,b_n)}{2^n},\  \text{for any } a,\, b\in \Sigma.\]
The shift operator \(\sigma:\Sigma\to\Sigma\) is defined as
\[\sigma(a)=(a_{2}a_{3}\cdots),\  \text{for any }a=(a_1a_2\cdots)\in \Sigma.\]
By \((\Sigma,\sigma)\) we denote the one-sided full shift on \(k\) symbols.
We say \((\Pi,\sigma)\) is a subshift if \(\Pi\) is closed and invariant under \(\sigma\).

We use an uppercase letter to denote a  word in \(\Sigma\), e.g, \(A=(a_{1}\cdots a_{n})\) and by \(|A|\) we denote the length of \(A\).
For any  word \(A\), let \(f_{n}(A)\) denote the first \((n\wedge|A|)\) symbols of \(A\), \(l_{n}(A)\) denote the last \((n\wedge|A|)\) symbols of \(A\),  and \([A]\) denote the cylinder set
\(\{x\in\Sigma\,|\,A=X_{|A|}\}.\)
For any two words \(A=(a_1\cdots a_n)\) and \(B=(b_1\cdots b_m)\), we define the concatenation  of \(A\) and \(B\) as \(AB=(a_1\cdots a_nb_1\cdots b_m)\).
For any point \(a=(a_{1}a_{2}\cdots)\) in \(\Sigma\), we denote the first \(n\) symbols of \(a\) by the corresponding uppercase letter \(A_{n}=(a_{1}\cdots a_{n})\).
We say a  word \(A=(a_1\cdots a_n)\) appears in a set \(S\subseteq \Sigma\), if there exists \(x\in S\) such that the permutation \((a_{1}\cdots a_{n})\) appears in \((x_{1}x_{2}\cdots)\).
A word \(A=(a_1\cdots a_n)\) appears in another word \(B=(b_1\cdots b_m)\) if the permutation \((a_{1}\cdots a_{n})\) appears in \((b_{1}\cdots b_{m})\).

\subsection{Distributional chaos}
\label{subsec-dc}
We will give a short review on distributional chaos which was introduced in \cite{SS1994,PM}.
Given a compact dynamical system \((X,f)\), for any positive integer \(n\), points \(x,\, y \in X\) and \(t \in [0,\mathrm{diam}(X)]\) define
\[\Phi_{x y}^{(n)}(t):=\frac{1}{n}\left|\left\{i \,|\,  d(f^{i}(x), f^{i}(y))<t, 0 \leq i<n\right\}\right|,\]
Let \(\Phi_{x y}\) and \(\Phi_{x y}^{*}\) be the
following functions:
\[\Phi_{x y}(t):=\liminf _{n \rightarrow \infty} \Phi_{x y}^{(n)}(t), \quad \Phi_{x y}^{*}(t):=\limsup _{n \rightarrow \infty} \Phi_{x y}^{(n)}(t).\]
Both functions \(\Phi_{x y}\) and \(\Phi_{x y}^{*}\) are nondecreasing.
\begin{Def}
  A pair of points \((x, y) \in X \times X\) is called \textit{distributionally chaotic of type 1} if
  \[{\Phi_{x y}(s)=0 \text { for some } s>0 \text { and }} \\ {\Phi_{x y}^{*}(t)=1 \text { for all } t>0}.\]
  A set containing at least two points is called a \textit{distributional chaos scrambled set of type 1} (DC1-scrambled, for short) if any pair of its distinct points is distributionally chaotic of type 1.
\end{Def}

\subsection{Multifractal analysis}
\label{subsec-ma}
We review the irregular set  and the regular set  \cite{Pesin-Pitskel1984,Barreira-Schmeling2000,DOT,Clim,TaV,Tho12,Ruelle,Oxt} of a dynamical system \((X,f)\) with respect to a continuous function \(\varphi\) on \(X\).
For a continuous function \(\varphi\) on \(X\), define the \(\varphi\)-irregular set as
\[
  I_{\varphi}(X):=\left\{x \in X\,\bigg| \, \lim _{n \rightarrow \infty} \frac{1}{n} \sum_{i=0}^{n-1} \varphi(f^{i} (x)) \text { diverges }\right\}.
\]
Denote
\[
  L_{\varphi}(X):=\left[\inf _{\mu \in \mathcal{M}_{f}(X)} \int \varphi \dif \mu, \sup _{\mu \in \mathcal{M}_{f}(X)} \int \varphi \dif \mu\right]
\]
and
\[
  \operatorname{Int}\left(L_{\varphi}(X)\right):=\left(\inf _{\mu \in \mathcal{M}_{f}(X)} \int \varphi \dif \mu, \sup _{\mu \in \mathcal{M}_{f}(X)} \int \varphi \dif \mu\right).
\]
For any \(a \in L_{\varphi}(f)\), define the level set as
\[
  R_{\varphi}(X,a):=\left\{x \in X \,\bigg|\, \lim _{n \rightarrow \infty} \frac{1}{n} \sum_{i=0}^{n-1} \varphi(f^{i} (x))=a\right\}.
\]
\(\text {Denote } R_{\varphi}(X)=\bigcup_{a \in L_{\varphi}(X)} R_{\varphi}(X,a)\), called the regular points for \(\varphi\).

In view of multifractal analysis, we state a finer result of Theorem \ref{Thm-main-omega-DC1}.
\begin{maintheorem}\label{Thm-multifraction}
  Given \(k\geq 2\), let \((\Sigma,\sigma)\) be the one-sided full shift on \(k\) symbols.
  Suppose that  \(\varphi\) is a continuous function on \(\Sigma\) and \(I_{\varphi}(\Sigma)\neq \emptyset\).
  Then for any \(a\in \operatorname{Int}(L_{\varphi}(\Sigma))\), the recurrent level set  \(\left\{ x\in Rec(\Sigma)\setminus BR(\Sigma)\,|\,x\ satisfies\ Case (i')\right\}\)  contains an uncountable DC1-scrambled subset in \(I_{\varphi}(\Sigma)\), \(R_{\varphi}(\Sigma)\) and \(R_{\varphi}(\Sigma,a)\), respectively, for i=1-6.
\end{maintheorem}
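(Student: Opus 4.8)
The plan is to deduce Theorem \ref{Thm-multifraction} from the corresponding statement on a well-chosen mixing subshift of finite type $\Pi_L$ by transporting everything through the map $T$, exactly as in the proof of Theorem \ref{Thm-main-omega-DC1}, but now keeping track of the $\varphi$-multifractal type of each point. The three structural facts I would reuse are that $T$ is injective and preserves empirical measures (Proposition \ref{prop-measure}), that $T$ carries DC1-scrambled pairs to DC1-scrambled pairs (Proposition \ref{prop-dc1}), and that for $y\in Trans(\Pi_L)$ satisfying Case (i) the image $T(y)$ lies in $Rec\setminus BR$ and satisfies Case (i') (Proposition \ref{prop-BR} and Lemma \ref{Lem-casei'-T}). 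The only genuinely new point relative to Theorem \ref{Thm-main-omega-DC1} is that membership in $I_\varphi(\sigma)$, $R_\varphi(a)$ and $R_\varphi$ must also survive the transfer.

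The key observation that makes the transfer transparent is that the $\varphi$-type of a point is determined by its set of empirical measures. Since $\varphi$ is continuous and $\mu\mapsto\int\varphi\,\dif\mu$ is weak-$*$ continuous, the set of subsequential limits of the Birkhoff averages $\frac1n\sum_{i=0}^{n-1}\varphi(\sigma^i(x))$ equals $\{\int\varphi\,\dif\mu:\mu\in V(x,\sigma)\}$; hence $x\in I_\varphi(\sigma)$ iff this set is not a singleton, $x\in R_\varphi(a)$ iff it equals $\{a\}$, and $R_\varphi$ is the union of the latter over $a$. By Proposition \ref{prop-measure} we have $V(T(y),\sigma)=V(y,\sigma)$, and every $\mu$ in this set is supported on $\Pi_L$, so $\int_{\Sigma_k}\varphi\,\dif\mu=\int_{\Pi_L}(\varphi|_{\Pi_L})\,\dif\mu$. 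Consequently $T(y)$ and $y$ have identical $\varphi$-type, and it suffices to produce the desired uncountable DC1-scrambled sets inside the multifractal sets of $(\Pi_L,\varphi|_{\Pi_L})$ and then push them forward.

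The main obstacle is to arrange that the prescribed value $a$, which is interior to the spectrum $L_\varphi(\sigma)$ computed on the full shift, also lies interior to the spectrum $L_{\varphi|_{\Pi_L}}(\sigma)$ of the restricted potential, since only values in the latter interval are realizable through $T$. Here I would first note that $I_\varphi(\sigma)\neq\emptyset$ forces $L_\varphi(\sigma)$ to be nondegenerate: a singleton spectrum $\{c\}$ would give $\{\int\varphi\,\dif\mu:\mu\in V(x,\sigma)\}=\{c\}$ for every $x$ by the observation above, making every Birkhoff average converge. Thus $\mathrm{Int}(L_\varphi(\sigma))\neq\emptyset$. Given $a\in\mathrm{Int}(L_\varphi(\sigma))$, using that $\Sigma_k$ has the specification property and hence periodic measures are dense, I would select two periodic points that are \emph{not} fixed points whose averages $\int\varphi\,\dif\nu$ straddle $a$ (replacing any fixed point by a nearby long non-fixed periodic orbit if necessary, which keeps the average on the correct side of $a$). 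By Lemma \ref{Lem-Pi-L} there is an $L\geq 3$ with both orbits contained in $\Pi_L$; then $a\in\mathrm{Int}(L_{\varphi|_{\Pi_L}}(\sigma))$, the restricted spectrum is nondegenerate, and by the specification property of $\Pi_L$ (Proposition \ref{prop-Pi-L}) one obtains $I_{\varphi|_{\Pi_L}}(\sigma)\neq\emptyset$.

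With this $L$ fixed, I would apply Remark \ref{Rem-multifractal} (that is, \cite[Theorem 1.6, Theorem 6.1]{CT2019}) to the system $(\Pi_L,\sigma)$ and the continuous potential $\varphi|_{\Pi_L}$: for each $i=1,\dots,6$ it yields uncountable DC1-scrambled subsets of $\{x\in Rec(\Pi_L)\cap Trans(\Pi_L)\,|\,x\text{ satisfies Case (i)}\}$ lying respectively in $I_{\varphi|_{\Pi_L}}(\sigma)$, $R_{\varphi|_{\Pi_L}}(a)$ and $R_{\varphi|_{\Pi_L}}$. Applying $T$ to such a set $S_i$ and invoking Proposition \ref{prop-BR}, Proposition \ref{prop-dc1} and Lemma \ref{Lem-casei'-T}, together with the $\varphi$-type preservation established in the second paragraph, shows that $T(S_i)$ is an uncountable DC1-scrambled subset of $\{x\in Rec\setminus BR\,|\,x\text{ satisfies Case (i')}\}$ contained in $I_\varphi(\sigma)$, $R_\varphi(a)$ and $R_\varphi$, respectively. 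The density assertion is obtained exactly as in the proof of Theorem \ref{Thm-main-omega-DC1} by letting $A_1$ range over arbitrary words not contained in $\Pi_L$.
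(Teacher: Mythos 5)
Your proposal is correct and follows essentially the same route as the paper: reduce to $(\Pi_L,\sigma)$ by choosing $L$ via two non-fixed periodic measures whose $\varphi$-averages witness $I_\varphi\neq\emptyset$ (resp.\ straddle $a$), invoke \cite[Theorems 1.6 and 6.1]{CT2019} on $\Pi_L$, and push the resulting scrambled sets forward through $T$ using Propositions \ref{prop-BR}, \ref{prop-dc1} and Lemma \ref{Lem-casei'-T}. Your observation that the $\varphi$-type is read off from $V(T(y),\sigma)=V(y,\sigma)$ is just a more conceptual packaging of the paper's Proposition \ref{prop-ergodic}, which proves the same invariance by directly comparing Birkhoff averages along the matched subsequences from Proposition \ref{prop-measure}.
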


\section{Construction and properties of map \(T\)}
Assume \((\Pi,\sigma)\) is a proper subshift of \((\Sigma,\sigma)\).
In this section, we construct an injective map \(T:\Pi\to\Sigma\), which is important to show the existence and density of Case (i') for i=1-6.
We will show \(T(\Pi)\subseteq Rec(\Sigma)\setminus BR(\Sigma)\), and \(T\) maps any point in \(\Pi\) satisfying Case (i)  to a point  in \(\Sigma\) satisfying Case (i').
Meanwhile this map keeps many properties unchanged in the view of invariant measure, Birkhoff ergodic average and distributional chaos.

We pick a fixed word \(A^{1}\) appearing in \(\Sigma\) but not in \(\Pi\).
For any \(x\in\Sigma\), \(Tx\) is defined by induction as follows.
Let \[B^n=X_n\cdots X_n \quad \text{and}\quad A^{n+1}=A^nB^nA^n,\]
where \(X_n\) repeats \(\left|A^n\right|^2\) times.
We define
\[Tx:=\bigcap_{n\geq 1}[A^{n}],\]
since \([A^{n}]\) are nested closed sets and \(\lim_{n\to\infty}\mathrm{diam}([A^{n}])=0\).

We remark that this construction is mainly inspired by the constructions in \cite{Petersen,HZ}.

\subsection{The image of \(T\)}
We show that the image of \(T\) is contained in \(Rec\setminus BR\) first.
\begin{prop}
  \label{prop-BR}
  For any \(x\in \Pi\), we have \(\omega_{B^*}(x)\subseteq\omega_{B^*}(Tx)\subseteq \Pi \subsetneq \omega_{\sigma}(Tx)\).
  Moreover, we have \(Tx\in Rec(\Sigma)\setminus BR(\Sigma)\).
\end{prop}

\begin{proof}
  Let \(y=Tx\).
  By definition, it is easy to check \(\omega_{B^*}(x)\subseteq\omega_{B^*}(Tx)\).
  Then, we show \(\omega_{B^*}(y)\subseteq \Pi\) by contradiction.
  Suppose there exists \(z\in\omega_{B^*}(y)\setminus \Pi \neq \emptyset\).
  Since \(\Pi\) is a proper closed subset of \(\Sigma\), there exists \(N\) such that \(Z_{N}\) does not appear in \(\Pi\).
  Then, we define an index for any finite word \(A=(a_1\cdots a_n)\) as
  \[\pi(A):=|\left\{1 \leq i \leq n-N+1 \,|\,(a_{i}\cdots a_{i+N-1})=Z_{N}\right\}|.\]
  \(\pi(A)\) counts the number of appearances of \(Z_{n}\) in \(A\).
  It is easy to check that for any words \(A\) and \(B\)
  \[\pi(AB)\leq \pi(A)+\pi(B)+N.\]
  Since \(B^{i}=X_{i}\cdots X_{i}\) is the concatenation of several \(X_{i}\), we get for any \(i\geq N\)
  \begin{equation}
    \label{eqn-b}
    \sup_{n\geq 1}\frac{\pi(f_{n}(B^{i}))}{n}\leq \frac{N}{i} \quad \text{and}\quad \sup_{n\geq 1}\frac{\pi(l_{n}(B^{i}))}{n}\leq \frac{N}{i}.
  \end{equation}
  In particular, we have for \(i\geq N\)
  \begin{equation}
    \frac{\pi(B^{i})}{|B^{i}|}\leq \frac{N}{i}.
  \end{equation}

  We claim that
  \begin{equation}
    \label{eqn-claim}
    \limsup_{n\to\infty}\sup_{i\geq 1}\frac{\pi(f_{n}(A^{i}))}{n}=\limsup_{n\to\infty}\sup_{i\geq 1}\frac{\pi(l_{n}(A^{i}))}{n}=0.
  \end{equation}
  Let \(i_{n}\) satisfy \(|A^{i_{n}}|\leq n<|A^{i_{n}+1}|\).
  Since \(A^{n+1}=A^{n}B^{n}A^{n}\), we have
  \[\sup_{i\geq 1}\frac{\pi(f_{n}(A^{i}))}{n}=\frac{\pi(f_{n}(A^{i_{n}+1}))}{n}.\]
  Again, by noticing \[\pi(f_{n}(A^{i_{n}+1}))\leq2\pi(A^{i_{n}})+\pi(f_{n}(B^{i_{n}}))+2N,\] we obtain
  \begin{align}
    \label{eqn-11}
    \nonumber\frac{\pi(f_{n}(A^{i_{n}+1}))}{n} & \leq \frac{2\pi(A^{i_{n}})+\pi(f_{n}(B^{i_{n}}))+2N}{n}                                            \\
    \nonumber                                  & \leq \frac{4\pi(A^{i_{n}-1})+2\pi(B^{i_{n}-1})+4N}{n}+\frac{\pi(f_{n}(B^{i_{n}}))}{n}+\frac{2N}{n} \\
                                               & \leq \frac{4}{i_{n}-1}+\frac{3N}{i_{n}-1}+\frac{6N}{n}.
  \end{align}
  Here, we use estimate \eqref{eqn-b} and the assmuption \[n\geq |A^{i_{n}}|\geq |B^{i_{n}-1}|=(i_{n}-1)|A^{i_{n}-1}|^{2}.\]
  Following the same line, we can get the same estimate for \(\pi(l_{n}(A^{i_{n}}))\).
  Combining  \(\lim_{n\to\infty}i_{n}=\infty\) with \eqref{eqn-11}, we prove the claim \eqref{eqn-claim}.
  
  Now, we assume \(I\) is a word appearing in \(y\) with length \(n\).
  For \(n\) sufficiently large there always exists a largest \(i_{n}\) such that  \(A^{i_{n}}\) appears in \(I\).
  According to the construction of \(T\), we know \(I\) must appear in \(A^{i_{n}+1}B^{j_{n}+1}A^{i_{n}+1}\) for some \(j_{n}\geq i_{n}\).
  Otherwise, \(i_{n}\) will not be the largest index.
  Combining \eqref{eqn-claim} with the fact that
  \[\pi(I)\leq 2\pi(f_{n}(A^{i_{n}+1}))+2\pi(l_{n}(A^{i_{n}+1}))+\pi(f_{n}(B^{j_{n}+1}))+\pi(l_{n}(B^{j_{n}+1}))+2N,\]
  we have
  \begin{align*}
    \limsup_{n\to\infty}\sup_{|I|=n}\frac{\pi(I)}{n} & \leq \limsup_{n\to\infty}\sup_{|I|=n}\Big\{\frac{2\pi(f_{n}(A^{i_{n}+1}))+2\pi(l_{n}(A^{i_{n}+1}))}{n}+\frac{2N}{j_{n}}+\frac{2N}{n}\Big\} \\
                                                     & \leq \limsup_{n\to\infty}\sup_{|I|=n}\frac{2N}{j_{n}}.
  \end{align*}
  Although \((i_{n},j_{n})\) depends on the choice of \(I\), we still have the following estimate
  \[2|A^{j_{n}+1}|+|B^{j_{n}+1}|\geq 2|A^{i_{n}+1}|+|B^{j_{n}+1}|\geq n.\]
  This implies
  \[\lim_{n\to\infty}j_{n}=\infty.\]
  Therefore, we obtain
  \[ \limsup_{n\to\infty}\sup_{\substack{|I|=n,\\ I\text{ appears in }y }}\frac{\pi(I)}{n}=0.\]
  
  On the other hand, from the definition we know \(N(y,B_{2^{-N-1}}(z))\) has a positive upper Banach density.
  This is equivalent to
  \[ \limsup_{n\to\infty}\sup_{\substack{|I|=n,\\ I\text{ appears in }y } }\frac{\pi(I)}{n}>0,\]
  which contradicts the above estimate.
  Therefore, we prove \(\omega_{B^*}(y)\subseteq \Pi\).

  Moreover, \(y\) is recurrent since
  \[d(y,\sigma^{|A^{n}|+|B^{n}|}(y))\leq 2^{-|A^{n}|}.\]
  Also, noticing that \(A_{1}\) does not appear in \(\Pi\), we have \(y\in \omega_{\sigma}(y)\setminus\Pi \).
  Hence, we show that \(\omega_{B^{*}}(y)\subseteq \Pi\subsetneq \omega_{\sigma}(y)\)  and \(y\in Rec(\Pi)\setminus BR(\Pi)\).
\end{proof}

\begin{rem}\label{Rem-omega-lowerB}
  Assume \(\Pi\) has two distinct periodic orbits.
  From Lemma \ref{Lem-omegalimit}, we know \[\omega_{B_{*}}(Tx)=\bigcap_{\mu\in M_{\sigma}(\Lambda)}S_{\mu}\subseteq\bigcap_{\mu\in M_{\sigma}(\Pi)}S_{\mu}=\emptyset,\]
  where \(\Lambda=\omega_{\sigma}(Tx)\).
  Therefore, we have \(\omega_{B_{*}}(Tx)=\emptyset\).
\end{rem}

\subsection{Empirical measures under \(T\)}
\label{sec-emp}
We consider the ergodic average generated by \(x\) and \(y=Tx\) for \(x\in \Pi\).
We denote their empirical measures  \(\mu_m=\frac{1}{m}\sum_{i=0}^{m-1}\delta_{\sigma^i(x)}\) and \(\nu_n=\frac{1}{n}\sum_{i=0}^{n-1}\delta_{\sigma^i(y)}\) by \(\mu_{m}\) and \(\nu_{n}\) respectively.
We denote all the real-valued continuous functions on \(\Sigma\) by \(C(\Sigma)\) and the uniform norm by \(\|\cdot\|_{\infty}\) .
For any function \(F\in C(\Sigma)\), we denote its modulus of continuity by
\[s(F,\varepsilon):=\sup_{d(x,y)\leq \varepsilon}\left|F(x)-F(y)\right|.\]
\begin{lem}
  \label{lem-m}
  For any \(x\in \Pi\) and  \(F\in C(\Sigma)\), we have
  \[\lim_{m\to\infty}\mu_{m}(F)-\nu_{\theta(m)}(F)=0.\]
  Here, \(\theta(m)=|A^{m+1}|\).
\end{lem}
\begin{proof}
  We will frequently use the identity \[|A^{m+1}|=2|A^{m}|+|B^{m}|=2|A^{m}|+m|A^{m}|^{2}.\]
  By definition, we have
  \begin{align*}
     & \quad\,\,\Big|\int_{\Sigma}F\dif\mu_m-\int_{\Sigma}F\dif\nu_{\theta(m)}\Big|                                                                               \\
     & =\Big|\frac{1}{m}\sum_{i=0}^{m-1}F(\sigma^i(x))-\frac{1}{\theta(m)}\sum_{i=0}^{\theta(m)-1}F(\sigma^i(y))\Big|                                             \\
     & \leq\Bigg|\frac{1}{\theta(m)}\sum_{i=0}^{\left|A^{m}\right|-1}F(\sigma^i(y))+\frac{1}{\theta(m)}\sum_{i=|A^{m}|+|B^{m}|}^{\theta(m)-1}F(\sigma^i(y))\Bigg| \\
     & \quad+\Bigg|\frac{1}{m}\sum_{i=0}^{m-1}F(\sigma^i(x))-\frac{1}{\theta(m)}\sum_{i=\left|A^{m}\right|}^{\left|A^{m}\right|+|B^{m}|-1}F(\sigma^i(y))\Bigg|    \\
     & := \cJ_{1}+\cJ_{2}.
  \end{align*}
  For \(\cJ_{1}\), we obtain
  \begin{align}
    \label{eqn-j1}
    \cJ_{1}\leq \frac{2|A^{m}|}{n}\|F\|_{\infty}\leq \frac{2}{m}\|F\|_{\infty}.
  \end{align}
  For any positive integer \(i\), we define \(r(i):=i-m\lfloor i/m\rfloor\).
  Notice the fact that
  \begin{equation*}
    |F(\sigma^{r(i-|A^{m}|)}(x))-F(\sigma^i(y))|\leq s(F,2^{m-r(i)})
  \end{equation*}
  for any \(|A^{m}|\leq i<|A^{m}|+|B^{m}|\).
  Hence, for \(\cJ_{2}\) we obtain
  \begin{align*}
    \cJ_{2} & \leq \Bigg|\frac{1}{m}\sum_{i=0}^{m-1}F(\sigma^i(x))-\frac{|B^{m}|}{m\theta(m)}\sum_{i=0}^{m-1}F(\sigma^i(x))\Bigg|                                   \\
            & \quad + \Bigg|\frac{1}{\theta(m)}\sum_{i=\left|A^{m}\right|}^{\left|A^{m}\right|+|B^{m}|-1}\big[F(\sigma^{r(i-|A^{m}|)}(x))-F(\sigma^i(y))\big]\Bigg| \\
            & \leq \left(1-\frac{|B^{m}|}{\theta(m)}\right) \|F\|_{\infty}+\frac{|A^{m}|^{2}}{\theta(m)}\sum_{i=1}^{m}s(F,2^{-i})                                   \\
            & \leq \frac{2}{m} \|F\|_{\infty} +\frac{1}{m}\sum_{i=1}^{m}s(F,2^{-i}).
  \end{align*}
  Combining the above estimate with \eqref{eqn-j1}, we show
  \begin{align*}
    \limsup_{m\to\infty}|\mu_{m}(F)-\nu_{\theta(m)}(F)|\leq \limsup_{m\to\infty} \Big[\frac{4}{m} \|F\|_{\infty} +\frac{1}{m}\sum_{i=1}^{m}s(F,2^{-i})\Big]=0.
  \end{align*}
\end{proof}

\begin{lem}
  \label{lem-n}
  For any \(x\in \Pi\) and  \(F\in C(\Sigma)\), we have
  \[\lim_{n\to\infty}\mu_{\tau(n)}(F)-\nu_{n}(F)=0.\]
  Here, for \(n\geq |A^{2}|\), \(\tau(n)\) is the unique integer satisfying \(|A^{\tau(n)+1}|<n\leq|A^{\tau(n)+2}|\).
\end{lem}

\begin{proof}
  Let \(\lambda=|A^{\tau(n)+1}|/n\).
  By definition, we have
  \begin{align*}
     & \quad\,\,\Big|\int_{\Sigma}F\dif\mu_{\tau(n)}-\int_{\Sigma}F\dif\nu_n\Big|                                                                                                                             \\
     & =\lambda\Big|\int_{\Sigma}F\dif\mu_{\tau(n)}-\int_{\Sigma}F\dif\nu_{|A^{\tau(n)+1}|}\Big|+\Big|(1-\lambda)\int_{\Sigma}F\dif\mu_{\tau(n)}-\frac{1}{n}\sum_{i=|A^{\tau(n)+1}|}^{n-1}F(\sigma^i(y))\Big| \\
     & := \cJ_{1}+\cJ_{2}.
  \end{align*}
  By Lemma \ref{lem-m}, we obtain
  \begin{equation*}
    \lim_{n\to\infty}\cJ_{1}=0.
  \end{equation*}
  We set \(m=\tau(n)\).
  Recall \(r(i):=i-m\lfloor i/m\rfloor\).
  For \(\cJ_{2}\), we have
  \begin{align}
    \label{est-1}
    \nonumber\cJ_{2} & =\Bigg| \frac{n-|A^{m+1}|}{nm}\sum_{i=0}^{m-1}F(\sigma^{i}(x)) - \frac{1}{n}\sum_{i=|A^{m+1}|}^{n-1}F(\sigma^i(y))\Bigg|                    \\
    \nonumber        & \leq \Bigg| \frac{n-|A^{m+1}|}{nm}\sum_{i=0}^{m-1}F(\sigma^{i}(x)) - \frac{1}{n}\sum_{i=|A^{m+1}|}^{n-1}F(\sigma^{r(i-|A^{m+1}|)}(x))\Bigg| \\
                     & \quad +\frac{1}{n}\sum_{i=|A^{m+1}|}^{n-1}\Big|F(\sigma^{r(i-|A^{m+1}|)}(x))-F(\sigma^{i}(y))\Big|.
  \end{align}
  Notice that
  \begin{align}
    \label{est-2}
     & \quad\,\,\Bigg| \frac{n-|A^{m+1}|}{nm}\sum_{i=0}^{m-1}F(\sigma^{i}(x)) - \frac{1}{n}\sum_{i=|A^{m+1}|}^{n-1}F(\sigma^{r(i-|A^{m+1}|)}(x))\Bigg|\nonumber                  \\
     & =\Bigg| \frac{n-|A^{m+1}|}{nm}\sum_{i=0}^{m-1}F(\sigma^{i}(x)) - \frac{1}{n}\sum_{i=0}^{m-1} \Big\lceil \frac{n-|A^{m+1}|-i}{m}\Big\rceil F(\sigma^{i}(x))\Bigg|\nonumber \\
     & \leq \frac{1}{n}\sum_{i=0}^{m-1}\left|\frac{n-|A^{m+1}|}{m}-\Big\lceil \frac{n-|A^{m+1}|-i}{m}\Big\rceil \right|\|F\|_{\infty}\nonumber                                   \\
     & \leq \frac{2m}{n}\|F\|_{\infty}.
  \end{align}
  Also, we have
  \begin{align}
    \label{est-3}
     & \quad\,\,\frac{1}{n}\sum_{i=|A^{m+1}|}^{n-1}\Big|F(\sigma^{r(i-|A^{m+1}|)}(x))-F(\sigma^{i}(y))\Big|\nonumber                                         \\
     & \leq \frac{1}{n} \Big\lceil \frac{n-|A^{m+1}|}{m}\Big\rceil \sum_{i=1}^{m}s(F,2^{-i})\nonumber                                                        \\
     & \quad +\chi_{n>\{|A^{m+1}|+|B^{m+1}|\}}\frac{1}{n}\sum_{i=|A^{m+1}|+|B^{m+1}|}^{n-1}\Big|F(\sigma^{r(i-|A^{m+1}|)}(x))-F(\sigma^{i}(y))\Big|\nonumber \\
     & \leq \frac{2}{m}\sum_{i=1}^{m}s(F,2^{-i})+\frac{2}{|A^{m+1}|}\|F\|_{\infty}.
  \end{align}
  Plugging above estimates into \eqref{est-1}, we obtain
  \begin{equation*}
    \lim_{n\to\infty}\cJ_{2}=0.
  \end{equation*}
  Therefore,
  \[\lim_{n\to\infty}\mu_{\tau(n)}(F)-\nu_{n}(F)=0.\]
\end{proof}

\begin{prop}\label{prop-measure}
  For any \(x\in \Pi\), we have \(V_{\sigma}(x)=V_{\sigma}(Tx)\).
  Furthermore, we have \(\omega_{\overline{d}}(x)=\omega_{\overline{d}}(Tx)\) and  \(\omega_{\underline{d}}(x)=\omega_{\underline{d}}(Tx)\).
\end{prop}

\begin{proof}
  Let \(\mu\in V_{\sigma}(x)\).
  Then, there exists a sequence of empirical measures \(\{\mu_{m_{k}}\}\) such that \(\mu_{m_{k}}\) converges to \(\mu\) weakly.
  By Lemma \ref{lem-m}, we know  \(\nu_{|A^{m_{k}+1}|}\) also converges to  \(\mu\) weakly.
  This implies \(V_{\sigma}(x)\subseteq V_{\sigma}(Tx)\).
  Similarly, by Lemma \ref{lem-n}, we have \(V_{\sigma}(Tx)\subseteq V_{\sigma}(x)\).
  
  By Lemma \ref{Lem-omegalimit}, we have \(\omega_{\xi}(x)=\omega_{\xi}(Tx)\) for any \(x\in\Pi\) and \(\xi=\overline{d},\,\underline{d}\).
\end{proof}

\subsection{DC1 pairs  under \(T\)}
We show that \(T\) keeps distributional chaos invariant.
\begin{prop}
  \label{prop-dc1}
  Assume \(x,\, \tilde{x}\in\Pi\) is a DC1-scrambled pair.
  We have  \(Tx\) and \(T\tilde{x}\) are also DC1-scrambled.
\end{prop}

\begin{proof}
  As before, we denote \(Tx\) and \(T\tilde{x}\) by \(y\) and \(\tilde{y}\) respectively.
  To show \(y\) and \(\tilde{y}\) are DC1-scrambled, it suffices to show that for any positive integer \(N\), we have
  \[\lim_{m\to\infty} \Phi_{x\tilde{x}}^{m}(2^{-N})-\Phi_{y\tilde{y}}^{|A^{m+1}|}(2^{-N})=0.\]
  Recall
  \[\Phi_{x\tilde{x}}^{(m)}(2^{-N}):=\frac{1}{m}\left|\left\{i \,|\,  d\left(\sigma^{i}(x), \sigma^{i}(\tilde{x})\right)<2^{-N}, 0 \leq i<m\right\}\right|.\]
  Let \(r(i)=i-m\lfloor i/m\rfloor\).
  For any \(|A^{m}|\leq i<|A^{m}|+|B^{m}|\) satisfying \(r(i-|A^{m}|)\leq m-N\), we have
  \[d(\sigma^i(y),\sigma^i(\tilde{y}))<2^{-N} \iff d(\sigma^{r(i-|A^{m}|)}(x),\sigma^{r(i-|A^{m}|)}(\tilde{x}))<2^{-N}.\]
  Therefore, we obtain
  \[-\frac{N}{m} \leq \Phi_{x\tilde{x}}^{(m)}(2^{-N})- \frac{1}{|B^{m}|}\left|\left\{i\,|\,d(\sigma^i(y),\sigma^i(\tilde{y}))<2^{-N},\, |A^{m}|\leq i<|A^{m}|+|B^{m}|\right\}\right|\leq \frac{N}{m}.\]
  This implies
  \begin{align*}
     & \quad\,\limsup_{m\to\infty}\Big|\Phi_{x\tilde{x}}^{m}(2^{-N})-\Phi_{y\tilde{y}}^{|A^{m+1}|}(2^{-N})\Big|                                                                                                            \\
     & \leq \limsup_{m\to\infty} \Big(1-\frac{|B^{m}|}{|A^{m+1}|}\Big)|\Phi_{x\tilde{x}}^{m}(2^{-N})|                                                                                                                      \\
     & \quad +\limsup_{m\to\infty}\frac{|B^{m}|}{|A^{m+1}|}\left|\Phi_{x\tilde{x}}^{(m)}(2^{-N})- \frac{1}{|B^{m}|}\left|\left\{i\,|\,d(\sigma^i(y),\sigma^i(\tilde{y}))<2^{-N},\, 0\leq i<|A^{m+1}|\right\}\right|\right| \\
     & \leq \limsup_{m\to\infty} \frac{N}{m}+\frac{2|A^{m}|}{|B^{m}|}                                                                                                                                                      \\
     & =0.
  \end{align*}
  Hence, \(y\) and \(\tilde{y}\) is a DC1-scrambled pair.
\end{proof}

\subsection{Regular and irregular points under \(T\)}

\begin{prop}
  \label{prop-ergodic}
  Suppose that  \(\varphi\) is a continuous function on \(\Sigma\) and \(I_{\varphi}(\Sigma)\neq \emptyset\).
  Then for any \(a\in \operatorname{Int}(L_{\varphi}(\Sigma))\), we have \(T\) maps \(I_{\varphi}(\Sigma)\cap \Pi\) into \(I_{\varphi}(\Sigma)\) and maps \(R_{\varphi}(\Sigma,a)\cap \Pi\) into \(R_{\varphi}(\Sigma,a)\).
\end{prop}

\begin{proof}
  We use the same notations as Lemma \ref{lem-n}.
  Let \(x\in \Pi\) and \(y=Tx\).
  By Lemma \ref{lem-n}, we have
  \begin{equation}
    \lim_{n\to\infty}\left|\frac{1}{n} \sum_{i=0}^{n-1} \varphi\left(\sigma^{i} (y)\right)- \frac{1}{\tau(n)} \sum_{i=0}^{\tau(n)-1} \varphi\left(\sigma^{i} (x)\right)\right|=0.
  \end{equation}
  Therefore, we obtain
  \begin{align*}
    \limsup_{n\to\infty}\frac{1}{n}\sum_{i=0}^{n-1}\varphi(\sigma^i(y)) & =\limsup_{n\to\infty}\frac{1}{\tau(n)}\sum_{i=0}^{\tau(n)-1}\varphi(\sigma^i(x)) \\
                                                                        & =\limsup_{m\to\infty}\frac{1}{m}\sum_{i=0}^{m-1}\varphi(\sigma^i(x)),
  \end{align*}
  and
  \begin{align*}
    \liminf_{n\to\infty}\frac{1}{n}\sum_{i=0}^{n-1}\varphi(\sigma^i(y)) & =\liminf_{n\to\infty}\frac{1}{\tau(n)}\sum_{i=0}^{\tau(n)-1}\varphi(\sigma^i(x)) \\
                                                                        & =\liminf_{m\to\infty}\frac{1}{m}\sum_{i=0}^{m-1}\varphi(\sigma^i(x)).
  \end{align*}
  This implies \(T\) maps \(I_{\varphi}(\Sigma)\cap \Pi\) into \(I_{\varphi}(\Sigma)\) and maps \(R_{\varphi}(\Sigma,a)\cap \Pi\) into \(R_{\varphi}(\Sigma,a)\).
\end{proof}

\section{Construction of an auxiliary subshift \(\Pi\)}
\label{sec-subshift}
In this section, we construct a specific mixing subshift \(\Pi\) which will be used in the proof of main theorems.

For any \(l\geq 3\) and \(k\geq2\), let
\(\B=\left\{(0\cdots0),\,\cdots,\,(k-1\cdots k-1) \right\},\)
where \((i\cdots i)\) has length \(l\) for any \(0\leq i<k\).
We define a subshift on \(k\) symbols \(\Pi\subseteq\Sigma\) as
\[\Pi=\{x=(x_{1}x_{2}\cdots)\in\Sigma\,|\,(x_{i+1}\cdots x_{i+l})\notin\B \text{ for any } i\geq0\}.\]
In other words, any symbol does not appear consecutively \(l\) times in \(\Pi\).

\begin{prop}
  \label{prop-Pi-L}
  For any \(l\geq 3\), we have \((\Pi,\sigma)\) is topological mixing.
  Moreover, \((\Pi,\sigma)\) satisfies the specification property.
\end{prop}
\begin{proof}
  Let \(U,\,V\) be two nonempty open sets in \(\Pi\).
  Assume \(x\in U\) and \(y\in V\), then there exists an integer \(n\) such that  \([X_{n}]\cap \Pi\subseteq U\) and \([Y_{n}]\cap\Pi\subseteq V\).
  For any \(m\geq 3\), we construct a word \(Z_{m}=(z_{1}\cdots z_{m})\) as follows.
  We let \(z_{1}\neq x_{n}\) and \(z_{m}\neq y_{1}\) since \(k\geq 2\).
  Moreover, we may assume \(Z_{m}\) appears in \(\Pi\) since \(l\geq 3\).
  Hence, we obtain
  \[[X_{n}Z_{m}Y_{n}]\cap \Pi \subseteq U.\]
  This implies for any \(m\geq 3\), we have
  \[\sigma^{n+m}(U)\cap V\supseteq [Y_{n}]\cap \Pi\neq \emptyset.\]
  Therefore, \(\Pi\) is topological mixing.
  From \cite[Proposition 21.2]{DGS} we know every topological mixing subshift satisfies the specification property.
  Hence, \(\Pi\) satisfies the specification property.
\end{proof}

\begin{lem}
  \label{Lem-Pi-L}
  Assume \(x_{1},\cdots,x_{n}\in\Sigma\) are periodic but not fixed.
  Then there exists \(l\geq 3\) such that \(x_{i}\in\Pi\) for any \(1\leq i\leq n\).
\end{lem}
\begin{proof}
  Assume \(p_{i}\geq 2\) is the period of \(x_{i}\) for \(1\leq i\leq n\).
  Let \(l\geq \max\left\{p_{1},\,p_{2},\,\cdots,\,p_{n} \right\}\).
  Since \(x_{i}\) is periodic and not fixed, there is no symbol appearing consecutively \(p_{i}\) times in \(x_{i}\).
  Hence, by the definition of \(\Pi\), we have \(x_{i}\in\Pi\) for any \(1\leq i\leq n\).
\end{proof}

\section{Proof of main theorems}
\label{sec-proof}
The following Lemma is an application of  \cite[Theorem 1.6, Theorem 6.1]{CT2019}.
\begin{lem}
  \label{Lem-casei}
  Assume \(l\geq 3\) and \(k\geq 2\) and let \((\Pi,\sigma)\) be the subshift defined in the previous section.
  Then \(\left\{ x\in Trans(\Pi)\,|\, x \text{ satisfies Case (i) }\right\}\) contains an uncountable DC1-scrambled subset for i=1-6.
  Furthermore, if \(\varphi\) is a continuous function on \(\Pi\) and \(I_{\varphi}(\Pi) \neq \emptyset \) then for any \(a \in \operatorname{Int}\left(L_{\varphi}(\Pi)\right)\), the level set \(\{x \in Trans(\Pi) \,|\, x \text { satisfies Case }(i)\}\) contains an uncountable DC1-scrambled subset in  \( I_{\varphi}(\Pi)\),  \( R_{\varphi}(\Pi)\) and  \(R_{\varphi}(\Pi,a)\), respectively, for i=1-6.
\end{lem}
\begin{proof}
  We only need to  check the conditions in \cite[Theorem 1.6, Theorem 6.1]{CT2019}.
  By Proposition \eqref{prop-Pi-L}, we know \(\Pi\) satisfies the specification property.
  
  For Case (2)-(6), it suffices to show that \(\Pi\)  is not uniquely ergodic by \cite[Theorem 1.6]{CT2019}.
  This is because \(\Pi\) has two different periodic orbits.
  
  For  Case (1), from \cite[Theorem 6.1, Remark 6.2, Remark 6.3]{CT2019} we only need to show that there exist an integer \(m\) such that \(\Pi\) does not contain periodic points with period \(m\).
  Notice the fact \(\Pi\) does not contain any fixed point.
  Therefore, we also show Case (1).
\end{proof}

\begin{lem}
  \label{Lem-omegalimit-T}
  Assume \(l\geq 3\) and \(k\geq 2\) and let \((\Pi,\sigma)\) be the subshift defined in the previous section.
  For any \(x\in Trans(\Pi)\) satisfying Case (i), we have \(Tx\in Rec(\Sigma)\setminus BR(\Sigma)\) and \(Tx\) satisfies Case (i') for i=1-6.
\end{lem}
\begin{proof}
  For \(\xi=\overline{d},\,\underline{d}\), we obtain \(\omega_{\xi}(x)=\omega_{\xi}(Tx)\) by Proposition \ref{prop-measure}.
  Since \(x\in Trans(\Pi)\) and satisfies Case (i), we have \(\omega_{B^*}(x)=\omega_{\sigma}(x)=\Pi\).
  Therefore, by Proposition \ref{prop-BR} we know \(\omega_{B^*}(x)=\omega_{B^*}(Tx)=\Pi\) and \(Tx\in Rec(\Sigma)\setminus BR(\Sigma)\).
  By Remark \ref{Rem-omega-lowerB}, we also have \(\omega_{B_*}(x)=\omega_{B_*}(Tx)=\emptyset\).
  From the above discussion, we end up showing that \(Tx\) satisfies Case (i').
\end{proof}

Now, we give the proof of main theorems.
\begin{proof}[Proof of Theorem \ref{Thm-main-omega-DC1}]
  For the density, we notice that \(A_{1}\) can be chosen as any word not appearing in \(\Pi\).
  For any nonempty open set \(U\subseteq\Sigma\), we choose \(A_{1}\) such that \([A_{1}]\subseteq U\) and \(A_{1}\) does not appear in \(\Pi\).
  By Lemma \ref{Lem-casei}, there exists \(x\in Trans(\Pi)\) satisfying Case (i).
  Therefore, by Lemma \ref{Lem-omegalimit-T}, we have \(Tx\) satisfies Case (i') and \(Tx\in Rec(\Sigma)\setminus BR(\Sigma)\).
  Since \(U\) is arbitrary, we show that  \(\left\{ x\in Rec(\Sigma)\setminus BR(\Sigma) \,|\,x \text{ satisfies Case (i') }\right\}\) is dense in \(\Sigma\).
  
  For distributional chaos, from Lemma \ref{Lem-casei}, there exists an uncountable DC1-scrambled set \(DC_{i}\) contained in \(\{x \in Trans(\Pi) \,|\, x \text{ satisfies Case }(i)\}\).
  By Proposition \ref{prop-dc1} and the fact \(T\) is an injection, we know \(T(DC_{i})\) is also an uncountable DC1-scrambled set.
  Moreover, by Lemma \ref{Lem-omegalimit-T}, we have \(T(DC_{i}) \subseteq\left\{ x\in Rec(\Sigma)\setminus BR(\Sigma) \,|\,x \text{ satisfies Case (i') }\right\}\).
\end{proof}

\begin{proof}[Proof of Theroem \ref{Thm-multifraction}]
  For the irregular points part, we only need to notice that \(I_{\varphi}(\Pi)=I_{\varphi}(\Sigma)\cap \Pi\).
  Then, by Proposition \ref{prop-ergodic} and Lemma \ref{Lem-casei}, we can use the same argument as in the proof of Theorem \ref{Thm-main-omega-DC1}.
  
  For the regular level set, we show that for any \(a\in\operatorname{Int}(L_{\varphi}(\Sigma))\) there exists \(l\geq 3\) such that \(R_{\varphi}(\Pi,a)=R_{\varphi}(\Sigma,a)\cap \Pi\).
  By definition, there exists \(\mu_{1},\,\mu_{2}\in M_{\sigma}(\Sigma)\) such that \[\int_{\Sigma}\varphi\dif\mu_{1}<a<\int_{\Sigma}\varphi\dif\mu_{2}.\]
  By \cite[Proposition 21.8]{DGS}, we may assume \(\mu_{1}\) and \(\mu_{2}\) are periodic measures, i.e., \(S_{\mu_{i}}\) is the orbit of some periodic point.
  Moreover, we may assume  \(\mu_{1}\) and \(\mu_{2}\) are not atomic.
  By Lemma \ref{Lem-Pi-L}, there exists \(l\geq 3\) such that \(S_{\mu_{i}}\subseteq \Pi\).
  Then, we derive \(a\in\operatorname{Int}(L_{\varphi}(\Pi))\) which implies \(R_{\varphi}(\Pi,a)=R_{\varphi}(\Sigma,a)\cap \Pi\).
  Again, we apply the argument in the proof of Theorem \ref{Thm-main-omega-DC1} and finish the proof.
\end{proof}

\section{Other dynamical systems}

\subsection{Two-sided full shifts}
We extend the results in previous sections to two-sided full shifts.
Recall that \(\mathcal{A}=\left\{0,1,\cdots,k-1\right\}\) is a set of \(k\) symbols.
With abuse of notations, we also use \((\Sigma,\sigma)\) to denote the two-sided full shift on \(k\) symbols.
Here, \(\Sigma=\prod_{n=-\infty}^{+\infty}\mathcal{A}\) and for any \(a\in\Sigma\) we denote the \(n\)-th coordinate of \(a\) by \(a_{n}\).
Then \(\Sigma\) is a compact space equipped with the following metric
\[d(a,b):=\sum_{n=1}^{\infty}\frac{\delta(a_n,b_n)+\delta(a_{-n},b_{-n})}{2^n},\  \text{for any } a,\, b\in \Sigma.\]
The shift operator \(\sigma:\Sigma\to\Sigma\) is defined by
\[\sigma(a)_{n}=a_{n+1},\  \text{for any }a\in \Sigma.\]

For any \(a\in\Sigma\) and \(n\leq m\), let \(A_{[n,m]}\) be the word \((a_{n}\cdots a_{m})\).
For any  word \(A\) and \(n\leq m\), let \([A]_{n,m}\) denote the cylinder set \(\{x\in\Sigma|A=X_{[n,m]}\}.\)
We say \((\Pi,\sigma)\) is a subshift if \(\Pi\) is closed and invariant under \(\sigma\).

\begin{Thm}\label{thm-twosided}
  Given \(k\geq 2\), let \((\Sigma,\sigma)\) denote the two-sided full shift on \(k\) symbols.
  Then the gap-set  \((Rec(\Sigma)\setminus BR(\Sigma))\)   contains an uncountable
  DC-1 scrambled subset.
\end{Thm}
\begin{proof}
  The key idea is again to construct an injection \(T\) from a proper subshif \(\Pi\) to \(\Sigma\).
  Now, we define \(T\) by induction.
  Let \(A^{1}\) be a word not appearing in \(\Pi\).
  For any \(x\in\Sigma\), we define
  \[B^{n}=X_{[-\lfloor \sqrt{n}\rfloor,n]}\cdots X_{[-\lfloor \sqrt{n}\rfloor,n]} \quad \text{and} \quad A^{n+1}=A^{n}B^{n}A^{n}0A^{n}.\]
  Then \(Tx\) is defined by
  \[Tx:=\lim_{n\to\infty}[A^{n}0A^{n}]_{-|A^{n}|,|A^{n}|}.\]
  We claim without proof that \(T\) maps \(Trans(\Pi)\) to \(Rec(\Sigma)\setminus BR(\Sigma)\) and keeps DC1 pair invariant.
  
  Recall \(\B=\left\{(0\cdots0),\,\cdots,\,(k-1\cdots k-1) \right\},\) where \((i\cdots i)\) has length \(l\) for any \(0\leq i<k\).
  We define subshift \(\Pi\subseteq\Sigma\) as
  \[\Pi=\{x\in\Sigma\,|\,(x_{i+1}\cdots x_{i+l})\notin\B \text{ for any } i\in \mathbb{Z}\}.\]
  Then, we can show that \(\Pi\) satisfies the specification property and is not uniquely ergodic.
  Therefore, we use the same argument in the previous sections to complete the proof.
\end{proof}

\subsection{Expansive systems with the shadowing property}
We recall the definitions of expansive systems and the shadowing property, for example, see \cite{sakai2003various}.
A dynamical system \((X,f)\) is \textit{positively expansive} if there exists \(c>0\) such that \(d(f^{i}(x),f^{i}(y))<c\) for all \(i\geq 0\) implies \(x=y\).
A dynamical system \((X,f)\) has \textit{the shadowing property} if for any \(\varepsilon>0\), there exists \(\delta>0\) such that for any \(\delta\)-pseudo-orbit \(\{x_{i}\}_{i=0}^{\infty}\), there exits \(y\in X\) satisfying \(d(x_{i},f^{i}(y))<\varepsilon\) for all \(i\geq 0\).
Here, as usual, a \(\delta\)\textit{-pseudo-orbit} is a sequence of points \(\{x_{i}\}_{i=0}^{\infty}\) in \(X\) satisfying \(d(f(x_{i}),x_{i+1})<\delta\) for all \(i\geq 0\).

Then, we extend results to expansive systems with the shadowing property as follows.
\begin{maintheorem}
  \label{thm-shadow}
  Suppose \((X,  f)\) is a topological dynamical system with the shadowing property.
  If  \((X,  f)\) is a positively expansive map or is an expansive homeomorphism,  then \(Rec(X)\setminus BR(X)\)  contains an uncountable DC-1 scrambled subset.
\end{maintheorem}
\begin{rem}
  In differential dynamics, both expanding systems and hyperbolic systems satisfy the shadowing property and expansiveness, for example, see \cite{pilyugin2006shadowing} for details. 
  Therefore, in such a system $Rec\setminus BR$ also contains an uncountable DC-1 scrambled subset.
\end{rem}

We first state a characterization for systems with the shadowing property.
This lemma enables us to make connection to symbolic dynamics.
\begin{lem}
  \label{horseshoe}
  \cite{DOT}
  Suppose \((X,  f)\) is a topological dynamical system.
  If \((X,  f)\) has the shadowing property and its topological entropy \(h_{top}(f)>0\), then for any \(0<\alpha<h_{top}(f)\) there are  integers  \(k,\,n\),   \(\log (k)/n>\alpha\) and a closed set \(\Delta\subseteq X\)
  invariant under $f^n$ such that there is a factor map \(\pi:(\Delta,   f^n)\to (\Sigma,  \sigma)\) onto the one-sided full shift on \(k\) symbols.
  If in addition, \((X,  f)\) is positively expansive,   then \(\pi\) is a conjugacy.
\end{lem}

The following observation on recurrent property has been proved by Zhou for \(W\) and \(QW\).
Here, we give a proof for \(Rec\) and \(BR\).
Henceforth, we will use $Rec(X,f)$ and	$BR(X,f)$ to re-denote the sets of recurrent points and Banach recurrent points for emphasizing the dynamic $f$.

\begin{prop}
  \label{prop-fr-recurrent}
  Suppose \((X,f)\) is a topological dynamical system.
  Then \(Rec(X,f)=Rec(X,f^{n})\) and \(BR(X,f)=BR(X,f^{n})\) for any \(n\in\mathbb{N^{+}}\).
\end{prop}

\begin{proof}
  For the simplicity of notations, we let \(g=f^{n}\).
  By definitions, it is easy to see that \(Rec(X,g)\subseteq Rec(X,f)\) and \(BR(x,g)\subseteq BR(X,f)\).
  Now we show the other direction is also true.
  We fix \(x\in Rec(X,f)\).
  Let \(\Lambda=\omega_{f}(x)\) and \(\Lambda_{i}=\omega_{g}(f^{i}(x))\) for \(0\leq i<n\).
  For convenience, we also extend the definition to all nonnegative integers by \(\Lambda_{n+i}=\Lambda_{i}\).
  Since \(x\in\Lambda\), for any \(\varepsilon>0\) we have
  \[N_{f}(x,B_{\varepsilon}(x))=\bigcup_{i=0}^{n-1}N_{g}(f^{i}(x),B_{\varepsilon}(x))\neq \emptyset.\]
  This implies
  \begin{equation}
    \label{eqn-lambda}
    \Lambda=\bigcup_{i=0}^{n-1}\Lambda_{i}.
  \end{equation}
  Without loss of generality, we assume \(m\) is the smallest nonnegative integer such that \(x\in\Lambda_{m}\).
  Since \(f^{m}(\Lambda_{m})\subseteq \Lambda_{2m}\), we have \(f^{m}(x)\in \Lambda_{2m}\).
  This implies
  \[\Lambda_{m}=\omega_{g}(f^{m}(x))\subseteq \Lambda_{2m}.\]
  In particular, we show \(x\in\Lambda_{2m}\).
  Repeating the same argument, we obtain \(x\in\Lambda_{lm}\) for any positive integer \(l\).
  Therefore, we derive \(m=0\) since \(m\) is the smallest such nonnegative integer.
  This shows that \(x\in\omega_{g}(x)\) and thus \(Rec(X,f)\subseteq Rec(X,g)\).
  
  Now, we further assume \(x\in BR(X,f)\).
  Notice again that for \(0\leq i<n\), we have
  \[f(\Lambda_{i})\subseteq \Lambda_{i+1}.\]
  On the other hand, we know that
  \[f^{n}(\Lambda_{0})=g(\omega_{g}(x))=\omega_{g}(x)=\Lambda_{0}.\]
  Therefore, we obtain
  \begin{equation}
    \label{eqn-flambda}
    f(\Lambda_{i})=\Lambda_{i+1}\quad \text{and} \quad f^{-1}(\Lambda_{i+1})=\Lambda_{i}.
  \end{equation}
  By Lemma \ref{Lem-omegalimit}, there exists \(\mu\in M_{f}(\Lambda)\) such that \(x\in S_{\mu}\).
  From \eqref{eqn-lambda} and \eqref{eqn-flambda}, we know that \(\mu(\Lambda_{0})>0\).
  Therefore, we can define a measure \(\nu\in M_{g}(\Lambda_{0})\) by
  \[\nu=\frac{1}{\mu(\Lambda_{0})}\mu|_{\Lambda_{0}}.\]
  If \(x\) is only contained in \(\Lambda_{0}\), then \(x\in S_{\nu}\) since \(\Lambda_{i}\) is closed.
  Otherwise, there is a smallest integer \(p>0\) such that \(x\) is  contained in \(\Lambda_{p}\).
  As previous discussion, we obtain \(n=pq\) for some  integer \(q\).
  Then, we obtain
  \[\Lambda_{0}\subseteq f^{p}(\Lambda_{0})\subseteq \cdots \subseteq f^{pq}(\Lambda_{0})=\Lambda_{0}.\]
  By \eqref{eqn-flambda} we have \(\Lambda_{0}=\Lambda_{lp}\) for any \(l\geq0\).
  Again, we get \(x\in S_{\nu}\) since \(\Lambda_{i}\) is closed.
  This implies \(x\in BR(X,g)\) and thus \(BR(X,f)\subseteq BR(X,g)\).
  
\end{proof}

\begin{prop}
  \label{dc1-fr}
  Suppose \((X,  f)\) is a topological dynamical system and \(n\in\N^{+}\).
  If   \((x, y) \in X \times X\) is   a  distributionally chaotic pair under \(f^{n}\), then
  \((x,y) \) is  a  distributionally chaotic  pair under $f$.
\end{prop}
\begin{proof}
  It comes directly from the uniform continuity of  $f,\cdots, f^{n-1}$ and the definition of  distributionally chaotic of type 1.
\end{proof}

\begin{proof}[Theorem \ref{thm-shadow}]
  We take \(\Delta,\Sigma\) and \(n\) as Lemma \ref{horseshoe}.
  Note that by Proposition \ref{prop-fr-recurrent}
  \begin{align*}Rec(\Sigma,\sigma)\setminus BR(\Sigma,\sigma)\simeq Rec(\Delta,f^n)\setminus BR(\Delta,f^n) \\
    \subseteq Rec(X,f^n)\setminus BR(X,f^n)=Rec(X,f)\setminus BR(X,f).
  \end{align*}
  Here, \(\simeq\) denotes the conjugacy.
  Thus by Theorem \ref{Thm-main-symbolic} and Proposition \ref{dc1-fr}  $Rec(X,f)\setminus BR(X,f)$  contains an uncountable DC-1 scrambled subset.
  
  Following the proof in \cite{DOT}, we can also have parallel results of Lemma \ref{horseshoe} for the homeomorphism case.
  Then, similar to the above arguments we can replace Theorem \ref{Thm-main-symbolic} by Theorem \ref{thm-twosided} to end the proof.
\end{proof}

\subsection{Non-uniformly hyperbolic systems}
Let $f$ be a $C^{1}$ diffeomorphism over a compact Riemannian manifold $M.$
An ergodic measure is called {\it hyperbolic} if all its Lyapunov exponents are nonzero.
It was proved in \cite{Katok,Katok1} that for a   $C^{1+\alpha}$ diffeomorphism $f$ with a nontrivial hyperbolic ergodic measure, there exists a subset $\Delta\subseteq M$ and $n\geq1, \,k\geq 2$ such that the subsystem $(\Delta,f^n)$ is topologically conjugated to the two-sided full shift on \(k\) symbols. 
Thus by  Theorem \ref{thm-twosided} one has

\begin{maintheorem}
  Let $M$ be a compact Riemannian manifold  of dimension at least $2$ and $f$ be a $C^{1+\alpha}$ diffeomorphism.
  If $\mu$ is a nontrivial hyperbolic ergodic measure,  then
  $Rec(M)\setminus BR(M)$  contains an uncountable
  DC-1 scrambled subset.
\end{maintheorem}

\begin{rem}
  It is still unknown whether Theorem \ref{Thm-main-omega-DC1} also holds for systems in this section.
  One main difficulty is that we do not know the relation of Case (i') for $f$ and Case (i') for $f^n.$
  It is also unknown whether one can use the specification property to give a direct construction instead of using the technique of Petersen, He and Zhou.
\end{rem}

\medskip
\section*{acknowledgements}
The authors would like to thank Xiaobo Hou for some comments and Prof. Xiaoyi Wang for providing some references to us.

\medskip
\bibliographystyle{plain}      
\bibliography{mybib.bib}   

\end{document}